\newtheorem{thm}{Theorem}
\newtheorem{proposition}{Proposition}
\newtheorem{lemma}{Lemma}
\newcommand{\R}{\mathbb R}
\newcommand{\emb}{{\text {Emb}}(M,\R^N)}
\begin{document}

\title{Gradient Flows of Penalty Functions in the Space of Smooth Embeddings}
\author{Dara Gold}
\address{Department of Mathematics and Statistics, Boston University}
\email{daragold@bu.edu}

\begin{abstract}

Motivated by manifold learning techniques, we give an explicit lower bound for how far a smoothly embedded 
compact submanifold in $\R^N$ can move in a normal direction and remain an embedding.  In addition, given a penalty function $P : \text{Emb}(M,\mathbb{R}^N) \rightarrow \mathbb{R} $ on the space of embeddings, we give a condition  which guarantees that the gradient $\nabla P$ of the penalty function
is normal to $\phi(M)$ at every point.

\end{abstract} 

\maketitle

\section{Introduction}
In this paper we give some theoretical results and explicit estimates for  gradient flows in infinite dimensions, motivated by issues in manifold learning.  These flows live in the space of embeddings $\emb$ of a fixed $k$-manifold $M$ in $\R^N$.  We give a condition under which the gradient vector field for a functional on $\emb$ is pointwise normal to $\phi(M)$ (where $\phi \in \emb$) (Theorem 1).  There is also a lower bound in Theorem 3 for the existence of a linear flow in a normal direction, which is the type of flow used in numerical simulations.

A fundamental problem in machine learning is to best approximate a fixed finite set of data points in $\mathbb{R}^N$ by a $k$-dimensional embedded manifold. (See {\it e.g.,} \cite{SW} and its references.)  One standard approach to defining an ideal approximation is to impose a penalty function on $\emb$, where the penalty function measures the total distance from the manifold to the data points, the total intrinsic or extrinsic curvature of the embedding, the volume of the embedding, or some linear combination of these terms.  (See \cite{BRS} for the case of fitting data points by a graph $\phi:\R^{N-1}\to \R$.) A minimal embedding for a given penalty function will in general not pass through all data points but will hopefully be more robust with respect to added points.  

In this setup, we have a penalty function $P:\emb\to\R$ on the space of smooth embeddings of a closed $k$-manifold $M$ into $\R^N$.  From a Morse theory perspective, it is natural to study the negative gradient flow of an initial embedding $\phi_0$, with the expectation that for generic $\phi_0$, the long time flow will approach a local minimum of $P$.  

This natural theoretical setup has many technical difficulties. One must choose the topology on 
$\emb$, ideally the Fr\'echet topology on $C^\infty$ embeddings, and check that the penalty function is differentiable enough to have a gradient.  Moreover, the penalty function should contain a distance penalty term, and there are issues of continuity of the gradient vector field at points in $\phi_0(M)$ which
are equidistant from two or more data points.  Most importantly,
it seems very difficult to show  with existing nonlinear PDE techniques that  the short time gradient flow exists for even simple penalty functions. In addition, even if short time, finite time and long time existence are all shown, it is not clear that a limiting point of a 
gradient line is in the original function space.  In our setup, the crucial issue is that $\emb$ is an open subset of ${\rm Maps}(M,\R^N)$ in any reasonable topology, so that it would actually be surprising if the long time flow stays in the space of embeddings.
Finally, because these penalty functions are not convex in general, any limit point need not be a global minimum for the penalty function.  

Despite these problems, the simulations in \cite{BRS} using gradient flow seem robust and better than many other existing manifold learning methods.  Therefore, it seems worthwhile to prove some results that address the theoretical difficulties. 

There is of course a large body of work on gradient flow techniques in infinite dimensions going back to Morse's original papers.  In applied math, there is seminal work of Osher and Sethian \cite{OS}, who introduced the Level Set Method, by which a surface is treated as the level set of a function. This approach, very familiar in finite dimensional Morse theory, avoids typical problems that arise with cusps and discontinuities in a flow whose speed is curvature dependent.  In pure math, 
gradient flows are used to study harmonic map and mean curvature flow/curve shrinking problems; in these cases, the penalty function is an energy or volume functional.  As some examples, 
Rupflin and Topping \cite{RT} study minimal immersions via gradient flow of the harmonic energy map paired with a flow of the Riemannian metric on the domain surface.  For mean curvature flow,
Hamilton \cite{Ham} and Gerhardt \cite{Ger}  prove that convex, compact surfaces in Euclidean space along with curves in a plane contract smoothly to a point under mean curvature flow. Xiao \cite{Xiao} gives a short time estimate for mean curvature flow for immersed star-shaped hypersurfaces in Euclidean space. Huisken and Sinestrari \cite{HS} consider compact hypersurfaces with positive mean curvature to study singularities than can arise during the flow. Using rescaling techniques now familiar in Ricci flow literature, they introduce a 
series of rescaled flows that approach a smooth flow.  Finally, there is a huge body of work on Floer theory, where the energy functional's critical points are pseudoholomorphic curves.

There is some literature which blends theoretical and applied techniques.  Although the gradient of a functional is typically computed using an inner product on the tangent space of the domain space, 
Mayer \cite{May} uses a discretized approximation to the gradient flow, and in particular replaces the time derivative in the 
penalty flow equation with a finite difference term. This leads to a short time existence result that seems so problematic in the smooth setting.  In  \cite{BMM}, there is a detailed discussion of the importance of choosing the right Sobolev space for applications to shape analysis.

In this paper we present first steps towards the  existence of gradient flow in the space of embeddings. In  \S\S1,2, we assume that $M$ is closed. In \S1, we prove that the gradient vector field $Z_{\phi} =\nabla P_{\phi}$ is normal at each point in $\phi(M)$ if and only if $P$ is  invariant under diffeomorphisms of $\phi(M).$ 
In \S2, we consider a fixed normal gradient vector field $\vec{Z}_{\phi}$ along $\phi(M)$ and give an explicit estimate  for how long the flow of embeddings $\phi_t = \phi + tZ$ 
 remains in the space of embeddings. 

\section*{Acknowledgements}
\noindent
Many thanks to Steve Rosenberg for his extensive comments, input and guidance. Also thanks to Carlangelo Liverani for allowing me to use his version of the Quantitative Implicit Function Theorem.

\section{A Condition for Normal Gradient Vector Fields}

In this section, we prove an infinite dimensional analogue of the standard finite dimensional result that gradient vectors are perpendicular to level surfaces.

In the following theorem we use the gradient of the penalty function $\nabla P$, which is defined with respect to the $L^2$ inner product on $T_{\phi}C^{\infty}(M,\mathbb{R}^N)$. For $X \in T_{\phi}C^{\infty}(M,\mathbb{R}^N)$, the gradient is characterized by
$$
dP(X) = \langle \nabla P, X \rangle = \int_{\phi(M)}\nabla P \cdot X \text{dvol},$$
where the volume form is induced from $\mathbb{R}^N$ and we are using the Euclidean dot product. $\nabla P$'s being pointwise normal to $\phi(M)$ means that  $\nabla P_{\phi(m)}\cdot X_{\phi(m)} =0$ for all $\phi(m) \in \phi(M)$.
\begin{thm} For a penalty function $P: C^{\infty}(M,\mathbb{R}^N) \rightarrow \mathbb{R}$, the gradient $\nabla P$ will be normal to $\phi(M)$ for each $m \in M$ if and only if $P$ is invariant under diffeomorphisms $\alpha: \phi(M)\rightarrow \phi(M)$, that are in the path component of the identity in Diff($\phi(M)$), i.e. $P(\alpha(\phi(M))) = P(\phi(M))$.
\end{thm}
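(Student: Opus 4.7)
The plan is to prove the two implications separately, using the correspondence between tangential variations of $\phi$ and isotopies of $\phi(M)$. At each $\phi(m)$, the ambient space splits as $T_{\phi(m)}\phi(M)\oplus N_{\phi(m)}\phi(M)$, so every $X\in T_\phi C^\infty(M,\R^N)$ decomposes pointwise as $X=X^T+X^\perp$. Because the integrand $\nabla P\cdot X$ in the $L^2$ pairing respects this splitting, $\nabla P_\phi$ is pointwise normal if and only if $\langle \nabla P_\phi, X\rangle=0$ for every smooth tangential test field $X$, which by the defining identity $dP(X)=\langle \nabla P,X\rangle$ is equivalent to $dP_\phi$ vanishing on all tangential directions.

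For the direction that diffeomorphism invariance forces $\nabla P$ to be pointwise normal, I would first observe that a tangential variation $X$ along $\phi$ corresponds, through the embedding, to a smooth vector field $V$ on the closed manifold $\phi(M)$ via $V(\phi(m)):=X(m)$. Compactness of $\phi(M)$ gives a flow $\alpha_t$ of $V$ defined for all time and lying in the identity component of $\mathrm{Diff}(\phi(M))$, with $\frac{d}{dt}\big|_{t=0}(\alpha_t\circ\phi)=X$. The invariance hypothesis $P(\alpha_t\circ\phi)=P(\phi)$ differentiated at $t=0$ yields $dP_\phi(X)=0$, so $\langle \nabla P_\phi, X\rangle=0$ for every tangential $X$. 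Applying this to the tangential component $X=(\nabla P_\phi)^T$ gives $\int_{\phi(M)}|(\nabla P_\phi)^T|^2\,\dvol=0$, and therefore $(\nabla P_\phi)^T\equiv 0$.

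For the converse, given $\alpha\in\mathrm{Diff}_0(\phi(M))$ choose an isotopy $\{\alpha_s\}_{s\in[0,1]}$ with $\alpha_0=\mathrm{id}$, $\alpha_1=\alpha$, and set $\phi_s=\alpha_s\circ\phi$. Each $\phi_s$ is an embedding with the same image $\phi_s(M)=\phi(M)$, and its time derivative $\dot\phi_s(m)=\frac{d}{ds}\alpha_s(\phi(m))$ lies in $T_{\phi_s(m)}\phi_s(M)$, so $\dot\phi_s$ is tangential to $\phi_s(M)$. The normality hypothesis applied at each $\phi_s$ gives $\nabla P_{\phi_s}\cdot \dot\phi_s\equiv 0$ pointwise, hence $\frac{d}{ds}P(\phi_s)=dP_{\phi_s}(\dot\phi_s)=0$, and integrating from $0$ to $1$ yields $P(\alpha\circ\phi)=P(\phi)$.

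The main obstacle is the clean bookkeeping between the two pictures, namely the tangential field $X$ along $\phi$ versus the induced vector field $V$ on $\phi(M)$, together with the passage from $L^2$ orthogonality against all tangential fields to pointwise normality. The latter is the infinite-dimensional analogue of the fundamental lemma of the calculus of variations, and the trick is that $(\nabla P_\phi)^T$ is itself a legitimate tangential test field, which closes the argument. One must also ensure enough regularity along the isotopy $\phi_s$ so that $dP_{\phi_s}(\dot\phi_s)$ can be computed as the $L^2$ pairing with $\nabla P_{\phi_s}$; this is the reason the theorem presupposes a penalty function differentiable enough to possess a gradient at each embedding encountered.
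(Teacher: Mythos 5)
Your proof is correct and takes a somewhat different route from the paper's in both directions. In the $(\Leftarrow)$ direction, both arguments begin the same way, differentiating the invariance $P(\alpha_t\circ\phi)=P(\phi)$ along the flow of a tangent vector field to obtain $L^2$-orthogonality of $\nabla P_\phi$ against all tangential test fields; but where the paper then localizes with a sequence of bump functions $f_{\epsilon_k}$ concentrating at $\phi(m_0)$ to extract pointwise orthogonality, you instead test against $(\nabla P_\phi)^T$ itself to get $\int_{\phi(M)}|(\nabla P_\phi)^T|^2\,\dvol=0$. Your version is cleaner and in fact sidesteps a small wrinkle in the paper's argument, namely that the test field $f_{\epsilon_k}\cdot Q(\phi(m_0))$ built from a fixed vector $Q(\phi(m_0))$ is not literally tangent to $\phi(M)$ away from $\phi(m_0)$, which has to be absorbed into the $\epsilon_k\to 0$ limit. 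In the $(\Rightarrow)$ direction, the paper works exclusively with flows $\alpha_{Y,t}$ of time-independent vector fields and reads off invariance along those one-parameter subgroups; you instead take an arbitrary isotopy $\alpha_s$ from the identity to $\alpha$ and integrate $\frac{d}{ds}P(\alpha_s\circ\phi)=0$. This is the stronger statement: the identity component $\mathrm{Diff}_0(\phi(M))$ is generated by isotopies (equivalently, time-dependent vector fields), and not every element is the time-one map of a fixed vector field, so your integration argument actually covers the theorem as literally stated where the paper's one-parameter-flow argument covers only a subgroup. One point worth flagging in both treatments, which you implicitly use and should be aware of: the normality hypothesis must be invoked at every $\phi_s$ along the isotopy, not just at $\phi_0=\phi$, so the theorem is best read as a statement about $\nabla P$ for all embeddings rather than for one fixed $\phi$.
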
 
\begin{proof}
$(\Leftarrow)$ Assume $P(\alpha(\phi(M)))= P(\phi(M))$ where $\alpha$ is a diffeomorphism from $\phi(M)$ onto $\phi(M)$ that is generated from the flow of a time independent vector field on $\phi(M)$. We know that $\nabla P_{\phi_0} \perp_{L_2} X_{\phi_0}$ for all $X_{\phi_0}$ that are tangent to the level set containing $\phi_0 \in C^{\infty}(M,\mathbb{R}^N)$.
\smallskip

\noindent \textbf{Claim:} All vector fields $Y_{\phi_0} \in \Gamma(T\phi_0(M))$ lie tangent to the level set of $\phi_0 \in C^{\infty}(M,\mathbb{R}^N)$. 
\medskip

\noindent \textbf{Proof of Claim:} For $Y_{\phi_0} \in \Gamma(T\phi_0(M))$ we have an associated flow along $\phi(M)$ given by $\alpha_{Y,t}:\phi(M) \rightarrow \phi(M)$ and $\dot{\alpha_{Y,t}}\phi(m)= Y_{\phi(m)}$. Furthermore $\alpha_{Y,t}:\phi(M) \rightarrow {\phi(M)}$ is a diffeomorphism for all  $t$. Therefore we can say 
$$
D_{\phi_0}P(Y) =\frac{\text{d}}{\text{dt}}|_{t=0} P(\phi_t) =\frac{\text{d}}{\text{dt}}|_{t=0} P(\alpha_{Y,t}(\phi))=0$$ 
where we have used the assumption and the fact that $\alpha_t(\phi)= \phi_t$. Note: the assumption of diffeomorphism invariance could have been written as:
$$
 P(\phi_t(M))=P(\alpha_t(\phi(M)))=P(\alpha_{t+r}(\phi(M)))$$
 for $t,r \in \mathbb{R}^N$ and $t=0$ in the last equality is the case used above.
\smallskip

We conclude that $\nabla P _{\phi_0} \perp_{L^2} Y_{\phi_0}$ for all vector fields $Y_{\phi_0} \in \Gamma(T\phi_0(M))$.

Fix $\phi(m_0) \in \phi(M)$ and a vector $Q(\phi(m_0)) \in T_{\phi(m_0)}\phi(M)$. 
Choose a sequence of smooth functions $f_{\epsilon_k} : \phi(M)\rightarrow \mathbb{R}$ such that $\int_{\phi(M)}f_{\epsilon_ k}\text{dvol} =1$, $\text{supp} f_{\epsilon_ k} \subset B_{\epsilon_ k}(\phi(m_0)) \cap \phi(M)$ and $\epsilon_k \rightarrow 0$. (Here $B_{\epsilon_ k}(\phi(m_0)) $ is the ball of radius $\epsilon_k$ in $\R^N$ with center at $\phi(m_0).$)
Define  vector fields $Y_{\epsilon_k} $ on $\phi(M)$ by
 $$
 Y_{\epsilon_k}(\phi(m)) = f_{\epsilon_k}(\phi(m))\cdot Q(\phi(m_0)). $$
Then we have 
\begin{eqnarray*}
0&=& 
\underset{\epsilon_k \rightarrow 0}{\text{lim}} \langle \nabla P _{\phi}, Y_{\epsilon_k} \rangle =\underset{\epsilon_k \rightarrow 0}{\text{lim}} \langle \nabla P _{\phi}, f_{\epsilon_k}\cdot Q(\phi(m_0)) \rangle\\
& =& \underset{\epsilon_k \rightarrow 0}{\text{lim}} \int_{\phi(M)} \nabla P _{\phi}(\phi(m))\cdot f_{\epsilon_k} Q(\phi(m_0))   
= \nabla P_ {\phi}{\phi(m_0)} \cdot Q(\phi(m_0)).
\end{eqnarray*}
Therefore $ \nabla P_ {\phi} \perp Y_{\phi}$ pointwise.\\

\noindent
($\Rightarrow$) Assume that $\nabla P_{\phi(m)} \perp \phi(M)$ for all $\phi(m) \in \phi(M)$. This is equivalent to saying $\nabla P_{\phi(m)} \perp Y_{\phi(m)}$ at each point $\phi(m) \in \phi(M)$ for all vector fields $Y \in \Gamma(T\phi(M))$. This gives 
$$
\frac{\text{d}}{\text{dt}}|_{t=0}P(\phi_t) =0,\ \ 
 \dot{\phi_t}|_{t=0} = Y, $$
which means that moving in the direction of the flow $\alpha_{Y,T}$ generated by a fixed vector field $Y$ is equivalent to moving along a level set in $C^{\infty}(M,\mathbb{R}^N)$. Because flows generated in this way are diffeomorphisms from $\phi(M)$ to $\phi(M)$ we can conclude that 
$$
P(\alpha_{Y,t}(\phi(M))) = P(\phi(M)) $$
for all $\alpha,t,Y$. \\
\end{proof}

\section{An Estimate for Flows in Normal Gradient Directions} 
The above result gives a condition for determining if the gradient vector field generated by a penalty function is normal at every point in $\phi(M)$. In the case where this is true, we would next like to consider how far $\phi(M)$ can move in a fixed normal gradient direction while remaining an embedding. The next set of results gives an explicit estimate for the lower bound of this flow. \\ 
\\
\noindent
3.1 \textbf{Notation and Definitions} 
\\
\\
\noindent
\textbullet $\: \:$ $\epsilon$ is the size of the neighborhood around $\phi(M)$ in which each point has a unique closest point in $\phi(M)$. The existence of this neighborhood for $M$ closed is guaranteed by the $\epsilon$-Neighborhood Theorem \cite[Ch.~2, \S3]{GP}. It is given explicitly in Lemma 3 in the proof of Theorem 3. \\
\textbullet $\: \:$ We will use two sets of coordinates on $\mathbb{R}^N$. Standard coordinates will be denoted $(x^1, \dots, x^N)$. We will also be representing points in $\phi(M)$ and in a small neighborhood around $\phi(M)$ as elements of the normal bundle $N\phi(M)$. In coordinates they will be given as $(q^1, \dots, q^k, r^1, \dots, r^{N-k})$ where the first $k$ components are manifold coordinates and the last $N-k$ are coordinates for the normal space. These will be referred to as normal coordinates. For $q \in \phi(M)$, its representation is $(q^1, \dots, q^k, 0, \dots, 0)$. For $w = (q^1, \dots, q^k, r^1, \dots, r^{N-k})$ inside a small neighborhood of $\phi(M)$, $q =(q^1 \cdots q^k, 0 \cdots 0)$ is $w$'s closest point in $\phi(M)$ and $(0,\cdots, 0, r^1, \cdots r^{N-k}) = w-q \in N\phi(M)$. \\
\textbullet $\:\:$ A vector in $N_q\phi(M)$ will be denoted as either $t\vec{v}(q)$ (where $\vec{v}$ is unit length) or as $r^iw_i(q)$ where the $\{w_i \}$ vectors are a unit length spanning set of the normal space at $q$. There are $N-k$ $\{w_i \}$ vectors, each with $N$ coordinates.\\
\textbullet $\: \:$ For $\phi(M) \subset \mathbb{R}^N$, the map $E: N\phi(M) \rightarrow \mathbb{R}^N$ acts by $E(q,r) =q+r$ (sending points to the end of perpendicular vectors in the normal bundle over $\phi(M)$).
 It is given explicitly by:
 \begin{eqnarray*}
E(( q^1, \cdots, q^k, r^1, \cdots, r^{n-k})) &=& (x^1(q)+ r^iw_i^1(q), \cdots, x^N(q)+ r^iw_i^N(q))\\
&=& (\phi^1(q)+ r^iw_i^1(q), \cdots, \phi^N(q)+ r^iw_i^N(q)),
\end{eqnarray*}
 where the domain is in normal coordinates and the range is in standard coordinates. Points $e = q_e+v_e$ for which the Jacobian of the $E$ map isn't full rank (at the point $(q_e,v_e)$) are defined as 'focal points.' \cite{Mil}\\
\textbullet $\: \:$  The inclusion map $\phi(M) \rightarrow \mathbb{R}^N$ takes points $(q^1, \cdots, q^k) \mapsto (x^1(\vec{u}),\cdots, x^N(\vec{u}))$.
 It is a standard result that the first fundamental form is the matrix with entries $(g_{ij}) = \big(\frac{\partial \vec{x}}{\partial u^i} \cdot \frac{\partial \vec{x}}{\partial u^j}\big) $ (Euclidean dot product) and the second fundamental form is the matrix with entries $(\vec{v} \cdot \vec{l}_{ij})$ where $\vec{l}_{ij}$ is the normal component of the vector $\frac{\partial ^2 \vec{x}}{\partial u^i \partial u^j}$.

\noindent
\textbullet $\: \:$ In choosing coordinates that make the first fundamental form the identity matrix, the eiqenvalues $p_1, \cdots, p_k$ of the second fundamental form are called the `principal curvatures' at $q =\phi(m) \in \phi(M)$. Considering the normal line $l= q +t\vec{v}$ extending from $q \in \phi(M)$ ($\vec{v}$ is a fixed unit normal vector at $q$) we have the proposition \cite[p. 34]{Mil}:\\
\begin{proposition}The focal points of $([\phi(M)],q)$ along $l$ are precisely the points $q + p_i^{-1}\vec{v}$, where $ 1 \leq i \leq k, p_i \neq 0$ 
\end{proposition}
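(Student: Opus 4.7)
The plan is to compute the Jacobian of the map $E : N\phi(M) \to \mathbb{R}^N$ at a point $(q, t\vec{v})$ on the normal line $l$ and determine exactly when it fails to have full rank. The first step is to make a judicious choice of coordinates at $q$: pick local manifold coordinates $(q^1,\dots,q^k)$ near $q$ such that the first fundamental form is the identity at $q$ and the second fundamental form in the direction $\vec{v}$ is diagonal, with diagonal entries equal to the principal curvatures $p_1,\dots,p_k$. Simultaneously, choose the orthonormal normal frame $\{w_1,\dots,w_{N-k}\}$ so that $w_1(q) = \vec{v}$. In these coordinates a point on $l$ has normal coordinates $(q^1,\dots,q^k,t,0,\dots,0)$.

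Next I would write out the columns of the Jacobian $dE$ at this point. Differentiating in the normal directions gives $\partial E/\partial r^i = w_i(q)$ for $i = 1,\dots,N-k$, which by construction are linearly independent and span $N_q\phi(M)$. Differentiating in the manifold directions gives
\[
\frac{\partial E}{\partial q^j} \Big|_{(q,t\vec v)} \;=\; \frac{\partial \phi}{\partial q^j}(q) \;+\; t\,\frac{\partial w_1}{\partial q^j}(q).
\]
Since the columns $w_i$ already span the normal space, performing column operations (subtracting appropriate multiples of the $w_i$ columns from the $\partial E/\partial q^j$ columns) leaves the rank unchanged but replaces each $\partial E/\partial q^j$ by its tangential component. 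Therefore $dE$ is full rank at $(q,t\vec{v})$ if and only if the tangential components of $\partial\phi/\partial q^j + t\,\partial w_1/\partial q^j$, for $j=1,\dots,k$, are linearly independent in $T_q\phi(M)$.

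The key calculation is to extract the tangential component of $\partial w_1/\partial q^j = \partial \vec{v}/\partial q^j$. Since $\vec{v}$ is normal, $\vec{v}\cdot(\partial\phi/\partial q^i) \equiv 0$, and differentiating yields
\[
\frac{\partial \vec{v}}{\partial q^j}\cdot\frac{\partial \phi}{\partial q^i} \;=\; -\vec{v}\cdot\frac{\partial^2\phi}{\partial q^i\partial q^j} \;=\; -\vec{v}\cdot \vec{l}_{ij},
\]
which in our chosen coordinates equals $-p_i\delta_{ij}$. Since $\{\partial\phi/\partial q^i\}$ is orthonormal at $q$, the tangential component of $\partial \vec{v}/\partial q^j$ is $-p_j\,\partial\phi/\partial q^j$. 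Hence the tangential component of $\partial E/\partial q^j$ equals $(1 - tp_j)\,\partial\phi/\partial q^j$. These $k$ vectors form a diagonal matrix in the basis $\{\partial\phi/\partial q^j\}$, so their linear independence fails precisely when $1 - tp_j = 0$ for some $j$ with $p_j\neq 0$, that is, when $t = p_j^{-1}$. This gives the focal points as $q + p_j^{-1}\vec{v}$ for $1\le j \le k$ with $p_j\neq 0$, as claimed.

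The main subtlety I expect is bookkeeping around the extension of $\vec{v}$ to a local normal frame: one must verify that the conclusion depends only on the values $\partial w_1/\partial q^j$ at $q$ itself (not on the particular extension), which follows because changing the extension alters $\partial w_1/\partial q^j$ only by elements of $N_q\phi(M)$, and these are absorbed into the column-reduction step with the $w_i$ columns. Once that is checked, the rest is the linear-algebra argument above.
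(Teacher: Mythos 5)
The paper does not prove this proposition; it is cited as a standard fact from Milnor's \emph{Morse Theory} \cite[p.~34]{Mil}. Your argument is correct and reproduces Milnor's standard proof: choose coordinates at $q$ making the first fundamental form the identity and diagonalizing the second fundamental form in the direction $\vec v$, compute the Jacobian of $E$ at $(q,t\vec v)$, and use the Weingarten identity $\partial_{q^j}\vec v\cdot\partial_{q^i}\phi=-\vec v\cdot\vec l_{ij}=-p_i\delta_{ij}$ so that, after column-reducing the manifold columns against the normal columns $w_i(q)$, the remaining tangential block is diagonal with entries $1-tp_j$; the map drops rank precisely when some $1-tp_j=0$, i.e. $t=p_j^{-1}$. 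The subtlety you flag --- independence of the answer from the choice of local normal frame extending $\vec v$ --- is handled correctly: two unit-normal extensions agreeing at $q$ differ by a normal field vanishing at $q$, and differentiating the identity $u\cdot\partial_{q^i}\phi\equiv 0$ at $q$ shows their $q^j$-derivatives differ only by a vector in $N_q\phi(M)$, which is absorbed by the normal columns.
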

\noindent
\textbullet $\: \:$ $K= {\max\limits_{\phi(m) \in \phi(M)}} {p_{\phi(m)}}$ where $p_{\phi(m)}$ is the largest eigenvalue of $(\vec{v}_{\phi(m)} \cdot l_{ij} )$ evaluated at $q=\phi(m) \in \phi(M)$. \\
\textbullet $\: \: $ $\delta$ is chosen such that for $d_{\mathbb{R}^N}(x,y) < \delta/2$ ($x,y \in \phi(M)$) we know $x+ \vec{tv(x)} \neq y+ \vec{tv(y)}$ for $t \leq \delta$. It is defined explicitly after the proof of Lemma 3. 

\medskip
\noindent
\textbf{Note}: The next two theorems are stated in terms of unit length normal vector fields on $\phi(M)$. 
The Euler class of the normal bundle is the obstruction to the existence of such a vector field.
If this class is nonzero, we apply the theorem to vector fields where each vector has length at most one.

\begin{thm}
Let $\vec{v}$ be a normal vector field of length at most one along $\phi(M) \subset \mathbb{R} ^{N}$ and $\epsilon$ be as defined above. $\phi_t(M) = \{{\phi(m) + t\vec{v} : m \in M} \}$ is immersed in $\mathbb{R}^{N}$ for $t < \epsilon$.
\end{thm}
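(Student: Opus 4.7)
The plan is to factor the map $\phi_t$ through the normal bundle and reduce the immersion condition to properties of the tubular neighborhood map $E$ already introduced in \S 3.1. Define the section
\[
\sigma_t : M \longrightarrow N\phi(M), \qquad \sigma_t(m) = \bigl(\phi(m),\, t\vec{v}(m)\bigr).
\]
Then $\phi_t = E \circ \sigma_t$, so by the chain rule $d\phi_t = dE \circ d\sigma_t$, and it suffices to show that both factors are injective on the relevant domain.

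First I would observe that $\sigma_t$ is a smooth embedding: composing $\sigma_t$ with the bundle projection $\pi : N\phi(M) \to \phi(M)$ recovers $\phi$, which is itself an embedding. Hence $d\pi \circ d\sigma_t = d\phi$ is injective, which forces $d\sigma_t$ to be injective at every $m \in M$. This step is independent of $t$ and uses no curvature information.

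Next I would use the bound $|\vec{v}(m)| \leq 1$ and $t < \epsilon$ to conclude that the image of $\sigma_t$ lies in the open disk subbundle $U_\epsilon = \{(q,r) \in N\phi(M) : |r| < \epsilon\}$. By the defining property of $\epsilon$ stated in \S 3.1 (every point of $\mathbb{R}^N$ within distance $\epsilon$ of $\phi(M)$ has a unique nearest point on $\phi(M)$), the map $E$ is a bijection from $U_\epsilon$ onto its image; its inverse is the smooth map $w \mapsto (\pi_{\phi(M)}(w),\, w - \pi_{\phi(M)}(w))$. Consequently $E|_{U_\epsilon}$ is a diffeomorphism, so $dE$ is a linear isomorphism at every point of $U_\epsilon$. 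Equivalently, $U_\epsilon$ contains no focal points of $\phi(M)$, consistent with Proposition~1 and the fact that the principal curvatures satisfy $|p_i| \leq K < \epsilon^{-1}$ on the relevant range.

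Combining the two steps, $d\phi_t(m) = dE_{\sigma_t(m)} \circ d\sigma_t(m)$ is a composition of injective linear maps, hence injective, for every $m \in M$ and every $t < \epsilon$. This is exactly the statement that $\phi_t$ is an immersion. The only real content is the factorization through $N\phi(M)$; the hardest ingredient — that $E|_{U_\epsilon}$ is a diffeomorphism — is already packaged into the definition of $\epsilon$ by the $\epsilon$-Neighborhood Theorem (and will be made explicit in the author's Lemma~3), so no additional curvature bookkeeping is needed at this stage. The quantitative sharpening (injectivity, not merely immersion) is what will require the extra parameter $\delta$ in the subsequent theorem.
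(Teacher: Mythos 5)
Your proof is correct and follows essentially the same approach as the paper: both pass through the normal bundle via the $\epsilon$-Neighborhood Theorem, identifying $Y^\epsilon$ with the $\epsilon$-disk bundle so that $\phi_t$ becomes (in the paper's language) a graph over the zero section with Jacobian containing an identity block, which is exactly your factorization $\phi_t = E\circ\sigma_t$ with $d\sigma_t$ injective because $\pi\circ\sigma_t=\phi$ and $dE$ invertible on the disk bundle. The only cosmetic difference is that you phrase it coordinate-free while the paper writes out the block matrix $DF$ explicitly in normal coordinates.
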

\noindent 
\begin{proof}
We want to show that the map $M \rightarrow \phi_t(M)$ is an immersion for $t$ defined in the theorem statement, but because $\phi(M)$ is assumed to be embedded in $\mathbb{R}^N$ it suffices to show that the map $F:\phi(M) \rightarrow \phi_t(M)$ (where for $q \in \phi(M)$, $F(q) = q +\vec{tv}(q)$) is an immersion. We want to consider $\phi_t(M)$ as sitting in an open subset of $\mathbb{R}^N$ that we can identify with the normal bundle over $\phi(M)$. In particular, the $\epsilon$  - Neighborhood Theorem \cite{GP} gives that on a compact, boundaryless manifold in $\mathbb{R}^N$ -$\phi(M)$ in our case- there exists a sufficiently small $\epsilon$ such that for each point $w$ in $Y^{\epsilon}$-- the set of points in $\mathbb{R}^N$ a distance less that $\epsilon$ from the manifold-- there is a unique closest point $q$ in $\phi(M)$. Furthermore $w-q \in N_q(\phi(M))$ where $N \phi(M)$ is the normal bundle over $\phi(M)$. We can diffeomorphically identify (locally) points in $Y^{\epsilon}$ with elements in $N \phi(M)$ as follows:
$$
w \mapsto (w-q)_q
$$
where $q$ is $w$'s unique closest point in $\phi(M)$. When considering the case of our fixed vector field $t\vec{V}$ along $\phi(M)$ as a section of the normal bundle we get the following coordinate representation of this section:
$$
\phi(m) + \vec{tv}(\phi(m)) \mapsto (q^1, \cdots, q^k, tv^1(q), \cdots, tv^{N-k}(q))
$$
where now the vector components are function of $q$.
Therefore the map:
$$
F: \phi(M) \rightarrow \phi_t(M) \subset Y^{\epsilon}
$$
has the normal coordinate representation:
$$
(q^1, \cdots, q^k) \mapsto (q^1, \cdots, q^k, tv^1(q), \cdots, tv^{N-k}(q))
$$
the differential of which is given by:
$$DF(q)
= \left( \begin{array} {ccc}
\frac{\partial q^1(q)}{\partial q^1} & \cdots & \frac{\partial q^1(q)}{\partial q^k}\\
\vdots & & \vdots\\
\frac{\partial (tv)^{n-k}(q)}{\partial q^1} & \cdots & \frac{\partial (tv)^{n-k}(q)}{\partial q^k} \end{array} \right)
=
\left( \begin{array} {ccc}
1 & \cdots & 0\\
\vdots & & \vdots\\
0 & \cdots & 1\\ 
\vdots & & \vdots\\
\frac{\partial (tv)^{n-k}(q)}{\partial q^1} & \cdots & \frac{\partial (tv)^{n-k}(q)}{\partial q^k} \end{array} \right)
$$
which has rank $k$, showing that the map taking $\phi(M) \rightarrow \phi_t(M)$ is an immersion for $t < \epsilon$. 
\end{proof}

Next, we would like to show that $\phi_t$ is injective, which along with its being an immersion (Theorem 2) and the assumption that $M$ is compact is enough to conclude that $\phi_t$ is an embedding. While Theorem 2 showed that $\phi_t$ is an immersion for $t\leq \epsilon$, Theorem 3 will show injectivity for $t\leq t^*$. Lemma 2 (included in the proof of Theorem 3) shows that $t^* \leq \epsilon$. Therefore the final theorem showing $\phi_t$ is an embedding is on the interval $t \leq t^*$.

The statement of Theorem 3 uses the new value $\delta$ which is defined explicitly after the proof of Lemma 3. Recall that $\delta$ is chosen such that for $d_{\mathbb{R}^N}(x,y) < \delta$ ($x,y \in \phi(M)$) we know $x+ \vec{tv(x)} \neq y+ \vec{tv(y)}$ for $t \leq \delta$.

\begin{thm}
Let $\vec{v}$ be a normal vector field of length at most one along $\phi(M) \subset \mathbb{R} ^{N}$ Let $t^* = \text{min}\{K^{-1}, \delta/3\}$. Then $\phi_t: M \rightarrow \mathbb{R}^N$ given by $m \mapsto \phi(m) + \vec{tv(\phi(m))}$ is 
an embedding for $t \leq t^*$.
\end{thm}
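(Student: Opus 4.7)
The strategy is the standard one: on a compact manifold, any injective immersion is an embedding, so once $\phi_t$ is known to be an immersion and injective on $[0,t^*]$, we are done. Theorem~2 already supplies immersion for $t<\epsilon$, so the plan splits naturally into two tasks. First, verify that $t^*\leq \epsilon$, which I would package as an auxiliary lemma (the forthcoming Lemma~2). Second, establish injectivity of $\phi_t$ for $t\leq t^*$.

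For the first task, the essential ingredient is Proposition~1, which places the first focal point along a normal ray $q+t\vec v$ at distance $1/p_i$ for a principal curvature $p_i$. Taking the maximum over $\phi(M)$ and over principal directions gives that the map $E$ has full-rank Jacobian whenever $t<K^{-1}$, so $E$ is a local diffeomorphism throughout the tube of radius $K^{-1}$. Supplementing this local invertibility with the $\delta$-property (which rules out two distinct normal rays meeting in time $\leq \delta$) upgrades local invertibility to global injectivity of $E$ on the tube of radius $\min\{K^{-1},\delta\}$, and this is exactly the neighborhood that the $\epsilon$-Neighborhood Theorem extracts. Hence $t^*=\min\{K^{-1},\delta/3\}\leq \epsilon$, and Theorem~2 applies throughout $[0,t^*]$.

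For the second task, I would argue injectivity by a case split on $d_{\mathbb R^N}(\phi(m_1),\phi(m_2))$ for distinct $m_1,m_2\in M$. When $d_{\mathbb R^N}(\phi(m_1),\phi(m_2))\geq \delta$, the triangle inequality together with $|\vec v|\leq 1$ yields
\[
|\phi_t(m_1)-\phi_t(m_2)| \geq |\phi(m_1)-\phi(m_2)| - 2t \geq \delta - 2(\delta/3) = \delta/3 > 0.
\]
When $d_{\mathbb R^N}(\phi(m_1),\phi(m_2))<\delta$, the very definition of $\delta$ gives $\phi(m_1)+t\vec v(\phi(m_1))\neq \phi(m_2)+t\vec v(\phi(m_2))$ for all $t\leq \delta$, and in particular for $t\leq t^*\leq \delta/3$. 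Either way, $\phi_t(m_1)\neq \phi_t(m_2)$.

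The hard part, and what I would isolate as Lemma~3, is showing that $\delta>0$ actually exists with quantitative control. I expect this to rest on the Quantitative Implicit Function Theorem (as hinted in the acknowledgements) applied to the map $E$: in normal coordinates $DE$ is block-structured, with identity in the manifold directions and a normal-direction block controlled by $D\vec v$ and the second fundamental form. Uniform bounds on these quantities, in which $K$ enters naturally, should yield a neighborhood on which $E$ is injective, providing an explicit positive $\delta$. This is the main obstacle because it is the single place where abstract compactness is insufficient and a genuinely quantitative estimate is required; the rest of the argument is the clean two-case triangle-inequality computation above.
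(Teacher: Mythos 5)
Your decomposition is correct and reaches the right conclusion, but your injectivity argument is a genuine streamlining of the paper's. The paper's Lemma~2 proves injectivity of $\phi_t$ by introducing an auxiliary map $H_t\colon Y^{\epsilon-t}\to Y^{\epsilon}$, $H_t(w)=w+t\vec v_{\pi(w)}$, together with a companion constant $\delta_{H_t}$ (the common radius about points of $\phi(M)$ on which $H_t$ is a local diffeomorphism), runs a contradiction argument against $\delta_{H_t}$, and then needs a closing step arguing $\delta\le\delta_{H_t}$ so that the $\delta_{H_t}$-dependence drops out of $t^*$. Your two-case triangle-inequality argument dispenses with $H_t$ and $\delta_{H_t}$ entirely by invoking the $\delta$-property (which the paper attaches to the endpoint map $E$, not to $H_t$) directly: when base points are at distance $\ge\delta$, a displacement of at most $2t\le 2\delta/3$ cannot close the gap; when they are at distance $<\delta$, the definition of $\delta$ forbids the collision. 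This also sidesteps the mild awkwardness in the paper that $\delta_{H_t}$ formally depends on the parameter $t$ one is trying to bound. The one imprecision is in your lemma establishing $t^*\le\epsilon$: local invertibility inside radius $K^{-1}$ together with the $\delta$-property yields global injectivity of $E$ only on the tube of radius $\min\{K^{-1},\delta/2\}$ (so that $|r_x|+|r_y|<\delta$ contradicts $d(x,y)\ge\delta$), not radius $\min\{K^{-1},\delta\}$ as you wrote; the paper's Lemma~3 uses the still more conservative $\delta/3$. Since $t^*=\min\{K^{-1},\delta/3\}\le\delta/3<\delta/2$, your conclusion survives, but the intermediate claim should be corrected. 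You rightly identify that the substantial work is the explicit construction of $\delta$ via the quantitative Implicit Function Theorem applied to $E$; that is where most of the paper's proof is spent, and you have deferred rather than supplied it.
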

\noindent
\begin{proof}
 It should be noted that we are interested in the injectivity of the map $\phi_t: M \rightarrow \mathbb{R}^N$ defined above, but because $\phi(M)$ is embedded in $\mathbb{R}^N$ it suffices to show that $F: \phi(M) \rightarrow \phi_t(M)$ is injective for $t \leq t^*$.

To view $F$ as a map acting on open subsets of $\mathbb{R}^N$ we define the function $H_t$ from $Y^{\epsilon -t} \rightarrow Y^{\epsilon}$, the set of points a distance $\epsilon-t$ and $\epsilon$  from $\phi(M)$ in $\mathbb{R}^N$ respectively. Setting $\pi: Y^{\epsilon} \rightarrow \phi(M)$ with $\pi(w)$ the closest point in $\phi(M)$ to $w$ we can define:
$$
 H_t(w) = w + \vec{tv}_{\pi(w)}.
 $$
Note that $H_t|_{\phi(M)} =F$.

We continue the proof with a series of Lemmas.

\begin{lemma}
 $DH_t(q_0)$ is invertible for $ w=q_0 \in \phi(M)$ 
 \end{lemma}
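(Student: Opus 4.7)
The plan is to compute $DH_t(q_0)$ explicitly using tubular neighborhood geometry, then read off invertibility from a block decomposition adapted to the splitting $\mathbb{R}^N \cong T_{q_0}\phi(M) \oplus N_{q_0}\phi(M)$.

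Writing $H_t(w) = w + t\vec{v}(\pi(w))$ and applying the chain rule gives
\[
DH_t(q_0) X \;=\; X + t\, D\vec{v}(q_0)\bigl(D\pi(q_0) X\bigr).
\]
The first fact I would use is that at a point $q_0 \in \phi(M)$, the nearest-point retraction $\pi$ has differential $D\pi(q_0)$ equal to orthogonal projection onto $T_{q_0}\phi(M)$: tangent vectors are fixed (they arise as tangents to curves in $\phi(M)$), while normal vectors are killed (rays $q_0 + s\xi$ with $\xi$ normal retract to $q_0$ for small $s$). Consequently $DH_t(q_0)$ acts as the identity on $N_{q_0}\phi(M)$, while on $T_{q_0}\phi(M)$ it equals $I + t\,D\vec{v}(q_0)$.

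The second step is to decompose $D\vec{v}(q_0) X$ into tangential and normal parts via the Weingarten equation: for $X \in T_{q_0}\phi(M)$, $(D\vec{v}(q_0)X)^T = -A_{\vec{v}(q_0)} X$, where $A_{\vec{v}(q_0)}$ is the shape operator whose eigenvalues are precisely the principal curvatures $p_1,\dots,p_k$ of $\phi(M)$ at $q_0$ in the direction $\vec{v}(q_0)$ (the curvatures appearing in Proposition~1). Choosing an orthonormal basis adapted to $T_{q_0}\phi(M) \oplus N_{q_0}\phi(M)$, the matrix of $DH_t(q_0)$ becomes block lower triangular,
\[
DH_t(q_0) \;=\; \begin{pmatrix} I_k - tA_{\vec{v}(q_0)} & 0 \\ tB & I_{N-k} \end{pmatrix},
\]
where $B$ encodes the normal connection term $X \mapsto \nabla^\perp_X \vec{v}$. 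The determinant is therefore $\prod_{i=1}^k (1 - tp_i)$.

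This last product is nonzero as soon as $t \neq 1/p_i$ for every positive principal curvature at $q_0$. Since $K$ is the supremum over $\phi(M)$ of the largest principal curvature, the hypothesis $t \leq t^* \leq K^{-1}$ keeps $t$ below every $1/p_i > 0$ (while $1 - tp_i \geq 1$ when $p_i \leq 0$), yielding invertibility throughout the range of $t$ relevant to Theorem~3. The main point to verify carefully is the identification of $D\pi(q_0)$ with the orthogonal projection and of the tangential part of $D\vec{v}(q_0)$ with $-A_{\vec{v}(q_0)}$; once those are in hand, the block triangular structure makes the remainder a one-line linear algebra computation, with Proposition~1 supplying the link between $K^{-1}$ and the failure locus of $DH_t$.
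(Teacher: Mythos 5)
Your proof is correct but follows a genuinely different route from the paper's. The paper works in normal-bundle coordinates $(q,r)$ and computes the Jacobian of the coordinate representation $\tilde H_t = E^{-1}\circ H_t\circ E$, which in these coordinates is simply $(q,r)\mapsto (q, r + t v(q))$; its differential is block lower triangular with identity diagonal blocks, hence has determinant $1$ and is invertible for every $t$ for which the chart is defined. You instead compute $DH_t(q_0)$ directly in the ambient Euclidean structure, using the facts that $D\pi(q_0)$ is tangential projection and that the Weingarten equation gives $(D\vec v\, X)^T = -A_{\vec v} X$, arriving at the block form with upper-left block $I_k - tA_{\vec v(q_0)}$ and determinant $\prod_i(1-tp_i)$. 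Your computation surfaces the curvature dependence that the paper's leaves implicit: the paper's determinant is identically $1$ precisely because the coordinate change $E$ absorbs the factor $\prod_i(1-tp_i)$ (this is the Jacobian of the normal exponential, which degenerates exactly at focal points $t = p_i^{-1}$ per Proposition 1), and the paper's conclusion is valid only while $DE$ at the image point $(q_0, tv(q_0))$ remains invertible, i.e.\ $t < K^{-1}$. Both arguments agree on the range $t \le t^* \le K^{-1}$ actually used in Theorem 3, but your version has the advantage of making the $K^{-1}$ threshold appear naturally from the linear algebra itself rather than implicitly through the domain of validity of the chart, and it ties the lemma directly to Proposition 1's focal-point characterization.
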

 \noindent
 \begin{proof}
 For $H_t: Y^{\epsilon-t} \rightarrow Y^{\epsilon} $ via $w \mapsto w + \vec{tv}(\pi(w))$ its normal coordinate representation (explained in proof of Theorem 1) is given by:
 $$
 (q^1, \cdots, q^k, r^1, \cdots, r^{N-k}) \mapsto (q^1, \cdots, q^k, r^1 + tv^1(\pi(q)), \cdots,r^{n-k} + tv^{N-k}(\pi(q)) )
 $$
 where it should be noted that the $r^i$'s are independent of coordinates but the $v^i(q)$'s are the coordinates for the fixed vector field along $\phi(M)$ which depend on $q$. For $w = q_0 \in \phi(M)$ the differential of the $H_t$ map (taken in coordinates) is given by:
 \begin{eqnarray*} \lefteqn{DH_t(w) }\\
 &=& \left( \begin{array} {cccccc}
 \frac{\partial q^1 (\vec{q},0)}{\partial q^1} & \cdots & \frac{\partial q^1 (\vec{q},0)}{\partial q^k} & \frac{\partial q^1 (\vec{q},0)}{\partial r^1}& \cdots &\frac{\partial q^1 (\vec{q},0)}{\partial r^{n-k}} \\
 \vdots & & & & & \vdots\\
 \frac{\partial q^k (\vec{q},0)}{\partial q^1} & \cdots & \frac{\partial q^k (\vec{q},0)}{\partial q^k} & \frac{\partial q^k (\vec{q},0)}{\partial r^1}& \cdots &\frac{\partial q^k (\vec{q},0)}{\partial r^{n-k}} \\
 \frac{\partial (r^1+ tv^1(q))(\vec{q},0)}{\partial q^1} & \cdots & \frac{\partial (r^1+ tv^1(q))(\vec{q},0)}{\partial q^k} & \frac{\partial (r^1+ tv^1 (q))(\vec{q},0)}{\partial r^1}& \cdots &\frac{\partial (r^1+ tv^1  (q))(\vec{q},0)}{\partial r^{n-k}}\\
 \vdots & & & & & \vdots\\
 \frac{\partial (r^{n-k}+ tv^{n-k}(q))(\vec{q},0)}{\partial q^1} & \cdots & \frac{\partial (r^{n-k}+ tv^{n-k}(q))(\vec{q},0)}{\partial q^k} & \frac{\partial (r^{n-k}+ tv^{n-k} (q))(\vec{q},0)}{\partial r^1}& \cdots &\frac{\partial (r^{n-k}+ tv^{n-k}  (q))(\vec{q},0)}{\partial r^{n-k}} \end{array} \right) \\
&=& \left( \begin{array} {cccccc}
 1 & \cdots & 0 & 0 & \cdots & 0 \\
 \vdots & & & & & \vdots\\
 0 & \cdots & 1 & 0 & \cdots & 0 \\
  \frac{\partial (tv^1(q))(\vec{q},0)}{\partial q^1} & \cdots & \frac{\partial ( tv^1(q))(\vec{q},0)}{\partial q^k} & 1 & \cdots & 0\\
 \vdots & & & & & \vdots\\
  \frac{\partial (tv^{n-k}(q))(\vec{q},0)}{\partial q^1} & \cdots & \frac{\partial ( tv^{n-k}(q))(\vec{q},0)}{\partial q^k} & 0 & \cdots & 1 \end{array} \right) 
  \end{eqnarray*}
       This matrix is invertible for all $t$ so we can conclude that there exists a ball $B_{\delta^{q_0}_{H_t}}$ of radius $\delta_{H_t}^{q_0}$ around $q_0$, on which $H_t$ is a diffeomorphism. 
   \end{proof}

   Let $\delta_{H_t} = \underset{q_0}{\text{min}} \: \: \delta_{H_t}^{q_0}$.
  Although $DH_t$ is invertible for all time (the size of the neighborhood will change according to $t$), we must have $t<\epsilon$ for $H_t$ to be defined. Therefore $t$ is  less than $\epsilon$ and we can say: For $x,y \in \phi(M)$ with $d_{\mathbb{R}^N}(x,y) < \delta_{H_t}$, we have $x+ \vec{tv}(x) \neq y+ \vec {tv}(y)$ for $t < \epsilon$,
  and we can show injectivity:

  \begin{lemma} $H_t|_{\phi(M)}$ is injective for $t < t^*= \text{min}\{\epsilon, \frac{\delta_{H_t}}{3} \}$.
 \end{lemma}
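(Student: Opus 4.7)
The plan is to prove injectivity of $F = H_t|_{\phi(M)}$ by splitting into two cases depending on the extrinsic distance between a pair of candidate preimages. Fix distinct $q_1, q_2 \in \phi(M)$, assume $t < t^* = \min\{\epsilon, \delta_{H_t}/3\}$, and show $q_1 + t\vec{v}(q_1) \neq q_2 + t\vec{v}(q_2)$ by handling ``nearby'' and ``far apart'' pairs separately. The condition $t<\epsilon$ is only needed so that $H_t$ is defined on $Y^{\epsilon-t}$; the injectivity bound comes from the other half of the minimum.

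For the nearby case, $d_{\mathbb{R}^N}(q_1, q_2) < \delta_{H_t}$, I would invoke Lemma 1 at the basepoint $q_1$: there is a ball $B_{\delta_{H_t}^{q_1}}(q_1) \subset Y^{\epsilon - t}$ on which $H_t$ is a diffeomorphism, and in particular injective. Since $\delta_{H_t} = \min_{q_0} \delta_{H_t}^{q_0} \leq \delta_{H_t}^{q_1}$, the point $q_2$ lies in this ball, so $H_t(q_1) \neq H_t(q_2)$.

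For the far-apart case, $d_{\mathbb{R}^N}(q_1, q_2) \geq \delta_{H_t}$, I would use the triangle inequality combined with the length bound $|\vec{v}| \leq 1$:
\[
|H_t(q_1) - H_t(q_2)| \geq |q_1 - q_2| - t\bigl(|\vec{v}(q_1)| + |\vec{v}(q_2)|\bigr) \geq \delta_{H_t} - 2t > \delta_{H_t} - \frac{2\delta_{H_t}}{3} = \frac{\delta_{H_t}}{3} > 0,
\]
where the strict inequality uses $t < \delta_{H_t}/3$. Together the two cases yield $H_t(q_1) \neq H_t(q_2)$ for every distinct pair, which is injectivity.

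The main obstacle is justifying the implicit step that $\delta_{H_t}$ is strictly positive. Each $\delta_{H_t}^{q_0}$ is the radius of a neighborhood produced by applying the inverse function theorem to $DH_t(q_0)$, and to conclude that the infimum over $q_0 \in \phi(M)$ is positive one needs uniform control of these radii. The natural tool is the quantitative inverse function theorem (the Liverani version referenced in the acknowledgements), which produces $\delta_{H_t}^{q_0}$ as a positive function of $q_0$ with quantitative lower bounds in terms of $\|DH_t(q_0)^{-1}\|$ and the modulus of continuity of $DH_t$. Since $\phi(M)$ is compact and both of these quantities vary continuously in $q_0$, the resulting infimum is achieved and positive. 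Once $\delta_{H_t} > 0$ is in hand, the dichotomy above completes the proof.
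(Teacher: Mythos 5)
Your proof is correct and is essentially the same argument as the paper's: both rely on (i) the local injectivity radius $\delta_{H_t}$ coming from Lemma 1, and (ii) the triangle-inequality estimate $|q_1-q_2|\le |t\vec{v}(q_1)|+|t\vec{v}(q_2)|\le 2t<2\delta_{H_t}/3$, combined by splitting at the threshold $d(q_1,q_2)\lessgtr \delta_{H_t}$. The only differences are cosmetic (you argue by a forward case split where the paper argues by contradiction) plus your explicit flag that $\delta_{H_t}=\inf_{q_0}\delta_{H_t}^{q_0}>0$ needs justification --- a point the paper does not address for $\delta_{H_t}$ directly but effectively circumvents later by showing $\delta<\delta_{H_t}$ and bounding $\delta>0$ via the quantitative inverse function theorem.
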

 \begin{proof}
 Assume instead that there exists some $x, y \in \phi(M)$ such that $x + \vec{tv(x)} = y + \vec{tv(y)}$ and $t < t^*$. We know by assumption that $d_{\mathbb{R}^N}(x,y)> \delta_{H_t}$.
  Therefore:
  \begin{eqnarray*}
    \delta_{H_t} < d_{\mathbb{R}^N}(x,y) &=&|x-y|\\
    &=& |x - (x + \vec{tv}(x)) + (x + \vec{tv}(x)) -y| \\
    &=& |x - (x + \vec{tv}(x)) +  (y + \vec{tv}(y)) -y| \\
    & \leq& |x - (x + \vec{tv}(x))| +  |(y + \vec{tv}(y)) -y| \\
     &=& |\vec{tv}(x)| +  |\vec{tv}(y)| = 2|t| < 2|t^*| \\
     &\leq& 2\delta_{H_t}/3 \space \space \end{eqnarray*}
    which is a contradiction. 
    \end{proof}

We now must compute $\epsilon$ (the size of the neighborhood around $\phi(M)$ within which each point has a unique closest point in $\phi(M)$). Lemma 3 again uses $\delta$ which is defined explicitly following the proof. Recall: $\delta$ is chosen such that for $d_{\mathbb{R}^N}(x,y) < \delta$ ($x,y \in \phi(M)$) we know $x+ \vec{tv(x)} \neq y+ \vec{tv(y)}$ for $t \leq \delta$. In the statement of Lemma 3, $\vec{tv}$ has been written in terms of unit length spanning vectors, $w_i$'s of the normal bundle $N\phi(M)$ with coefficients $r^i$ ($1\leq i \leq N-k$).   

\begin{lemma} $\epsilon = \text{min} \{K^{-1}, \delta/3 \}$ where $\delta$ is such that for $x,y \in \phi(M)$ and $d_{\mathbb{R}^N}(x,y) < \delta$ we have $x+r_x^iw_i(x) \neq y+r_y^iw_i(y)$ where $|r_x|<\delta  $ and $|r_y|<\delta  $.  \\
\end{lemma}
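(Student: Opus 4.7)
The plan is to argue by contradiction. Suppose $w \in \mathbb{R}^N$ with $d_{\mathbb{R}^N}(w,\phi(M)) < \epsilon := \min\{K^{-1}, \delta/3\}$ admits two distinct closest points $q_0, q_1 \in \phi(M)$. Closest points exist by compactness of $\phi(M)$, and first-order optimality for the smooth function $p \mapsto |w-p|^2$ on the boundaryless $\phi(M)$ forces $w-q_i \in N_{q_i}\phi(M)$ for $i=0,1$. Expanding in the unit-length normal frame, $w-q_i = r_i^j w_j(q_i)$ with $|r_i| \le |w-q_i| < \epsilon \le \delta/3 < \delta$.

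Next I split on the Euclidean separation of the two feet. If $d_{\mathbb{R}^N}(q_0,q_1) \ge \delta$, the triangle inequality yields
\[
\delta \le d_{\mathbb{R}^N}(q_0,q_1) \le |q_0-w| + |w-q_1| < 2\epsilon \le 2\delta/3,
\]
a contradiction. Otherwise $d_{\mathbb{R}^N}(q_0,q_1) < \delta$, and since $|r_0|,|r_1| < \delta$, the defining property of $\delta$ restated in the lemma gives $q_0 + r_0^j w_j(q_0) \neq q_1 + r_1^j w_j(q_1)$; but each side equals $w$, again a contradiction. This handles the $\delta/3$ half of $\epsilon$.

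The curvature bound $K^{-1}$ serves to keep $w$ strictly below the focal distance along each inward normal from $q_i$: by Proposition 1 the first focal point along $q_i + t\vec{v}_i$ occurs at $t = p_\ell^{-1} \ge K^{-1}$, so $|w-q_i| < K^{-1}$ excludes focal points on the segment from $q_i$ to $w$. Equivalently, the Hessian of $p \mapsto |w-p|^2$ at $q_i$ is positive definite and the normal exponential map $E$ is a local diffeomorphism near $(q_i, w-q_i)$, so each putative foot is a strict local minimum rather than a degenerate critical point. The main obstacle I foresee is cleanly separating the two roles of the bound---$K^{-1}$ furnishing local well-posedness of the foot via the absence of focal points, and $\delta/3$ upgrading this to global uniqueness via the triangle inequality---and checking that the forthcoming explicit definition of $\delta$ is in fact compatible with this division of labor.
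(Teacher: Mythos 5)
Your argument is correct and matches the paper's: both derive a contradiction from the triangle inequality $\delta \le d_{\mathbb{R}^N}(x,y) \le |r_x| + |r_y| < 2\epsilon \le 2\delta/3$ once the defining property of $\delta$ rules out the case $d_{\mathbb{R}^N}(x,y) < \delta$, with the paper simply collapsing your explicit two-case split into the one-line assertion that $d_{\mathbb{R}^N}(x,y) > \delta$. Your closing observation is also apt: the $K^{-1}$ term in $\epsilon$ is never invoked in this triangle-inequality contradiction, and the paper's Lemma 3 proof likewise omits it, the bound $|r| < K^{-1}$ being imposed elsewhere only to keep $DE$ invertible on the tubular neighborhood.
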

 \begin{proof} Suppose there exists $w \in Y^{\epsilon}$ such that there are two closest points $x, y \in \phi(M)$. Then we can write $w= x+r_x^iw_i(x) = y+r_y^iw_i(y)$ where $|r_x|<\epsilon$ and $|r_y|<\epsilon$. We know by assumption that $d_{\mathbb{R}^N}(x,y) > \delta$ and we have a similar proof as in Lemma 2:
\begin{eqnarray*}
\delta < d_{\mathbb{R}^N}(x,y) &=&|x-y|\\
&=&|x-(x+r^i_xw_i(x)) + (x + r^i_xw_i(x)) -y| \\
    & =& |x - (x + r^i_xw_i(x)) +  (y + r^i_yw_i(y)) -y|\\
      &\leq& |x - (x + r^i_xw_i(x))| +  |(y + r^i_yw_i(y)) -y|\\
      &=& |r^i_xw_i(x)| +  |r^i_yw_i(y)| \\ &=&  |r_x|+ |r_y| < 2\epsilon \leq 2\delta/3 
     \end{eqnarray*} 
     which is a contradiction.
     \end{proof}

 We will obtain $\delta$ in the following way: Recall $E: N\phi(M) \rightarrow \mathbb{R}^N$ acts on points in the normal bundle over $\phi(M)$ by $(q,r) \mapsto q+r$. Here we will be considering the compact subset of $N\phi(M)$ which consists of vectors $\vec{r}$ such that $|r| \leq .999K^{-1}$. In coordinates, recall $E$ is given by:
  \begin{eqnarray*}E(( q^1, \cdots, q^k, r^1, \cdots, r^{n-k})) &=& (x^1(q)+ r^iw_i^1(q), \cdots, x^N(q)+ r^iw_i^N(q))\\
 &=& (\phi^1(q)+ r^iw_i^1(q), \cdots, \phi^N(q)+ r^iw_i^N(q)).
 \end{eqnarray*}
 
   Fix  $q_0=(q_0^1, \cdots, q^k_0,0, \cdots, 0) \in \phi(M).$ For a point $(q_0,r_0)$ in the fiber over $q_0$ we know that $DE(q_0,r_0)$ is invertible (see proof of Proposition 2) and therefore there is a ball of radius $\delta_{(q_0,r_0)}$ around $(q_0,r_0)$ on which $E$ is a diffeomorphism. Because the fiber over $q_0$ is compact, we can let
  $\delta_{q_0} = \underset{r_0}{\text{min}} \: \: \delta_{(q_0,r_0)} > 0$. 
  
  Consider the set 
  $$A_{q_0}=\{ q \in \phi(M) \: \: : d_{\mathbb{R}^N} \: (q, q_0) < \delta_{q_0} /2 \}.$$
   Then  $E$ is a diffeomorphism on the subset of $N\phi(M)$ given in normal coordinates by $B_{q_0} = \{ (q^1, \cdots, q^k, r^1, \cdots, r^{n-k}) | \: \: |r|< \delta_{q_0} /2, (q^1, \cdots, q^k, 0, \cdots, 0) \in A_{q_0} \}$ as follows:
  For $(q_1, r_1)\in B_{q_0}$:
  \begin{eqnarray*}
  |(q_1, r_1) - (q_0,0)| &=& |(q_1, r_1) -(q_1,0) +(q_1,0)- (q_0,0)|\\
  &<&   |(q_1, r_1) -(q_1,0)| +|(q_1,0)- (q_0,0)| \\
&=&  |r_1| +|(q_1,0)- (q_0,0)| \\
&<& \delta_{q_0} /2 +\delta_{q_0} /2 =\delta_{q_0}.
\end{eqnarray*}
  Therefore for $(q_1, r_1), (q_2, r_2) \in B_{q_0}$ $\big((q_1, r_1)\neq (q_2, r_2)\big)$ we know $(q_1,0), (q_2,0) \in A_{q_0}$ and $E((q_1,r_1)) = q_1 + r_1^iw_i \neq q_2 +r_2^1w_1 =E ((q_2,r_2))$.
  
  We let
  \begin{equation*}
   \delta =\underset{q_0} {\text{inf}} \: \: \delta_{q_0}/2.
   \end{equation*}
  We can now say that for  $x,y \in \phi(M)$ and $d_{\mathbb{R}^N}(x,y) < \delta$ we have $x+r_x^iw_i(x) \neq y+r_y^iw_i(y)$ for $|r_x|< \delta$ and $|r_y|< \delta$ by construction.\\

  It remains to compute $\delta_{(q_0,r_0)}$ explicitly, from which we can get $\delta$ with the method described above (Recall, $\delta_{(q_0,r_0)}$ is the radius around $(q_0,r_0)$  on which $E$ is a diffeomorphism). We will compute $\delta_{(q_0,r_0)}$ using a quantitative version of the Implicit Function Theorem (adapted to the Inverse Function Theorem case), given as a proposition below. The formulation of the theorem, along with its proof is in the Appendix.

For $G \in C^1(\mathbb{R}^{2N}, \mathbb{R}^N)$, let $(q_0, y_0) \in \mathbb{R}^{2N} $ satisfy $G(q_0, y_0) =0 $. For fixed $\gamma > 0$ let $V_{\gamma} = \{(q,y) \in \mathbb{R}^{2N}: |q-q_0| \leq \gamma, |y-y_0| \leq \gamma \}$.
In the case where $G(q,y) = E(q) - y$, the following theorem is the adaptation of the Implicit Function Theorem to the Inverse Function Theorem (here the matrix norm $||A||$ is the sup norm over the entries):

\begin{proposition} Assume that $\partial_q G(q_0,y_0)$ is invertible and choose $\delta^0 > 0$ such that\\ $\text{sup}_{(q,y)\in V_{\delta^0}}|| 1 - [\partial_q G(q_0,y_0)]^{-1}\partial_q G(q,y)|| \leq  1/2.$ Let $B_{\delta^0} = \text{sup}_{(q,y)\in V_{\delta^0}}||\partial_y G(q,y)||$ and $M= ||\partial_q G(q_0,y_0)^{-1}||$. Let $ \delta_1 = (2MB_{\delta^0})^{-1}\delta^0$ and $\Gamma_{\delta_1} = \{y \in \mathbb{R}^m : ||y-y_0|| < \delta_1 \}.$ Then in the case that $G(q,y) =E(q) -y$, the solutions to $G(q,y)=0 (\Rightarrow E(q)= y)$ in the set $\{ (q,y) : ||q-q_0|| < \delta^0, ||y-y_0|| < \delta_1 \}$ are given by $(E^{-1}(y), y)$. Alternatively, E is a diffeomorphism on $E^{-1}(B_{\delta_1}(y_0)) \cap B_{\delta^0}(q_0)$. 
\end{proposition}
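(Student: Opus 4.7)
The plan is to prove the proposition by the standard contraction-mapping argument that underlies every quantitative Inverse Function Theorem, keeping careful track of constants. For each fixed $y \in \Gamma_{\delta_1}$ I would define
$$T_y(q) = q - [\partial_q G(q_0,y_0)]^{-1} G(q,y).$$
Since $[\partial_q G(q_0,y_0)]^{-1}$ is invertible, fixed points of $T_y$ are precisely the $q$ solving $G(q,y) = 0$. The strategy is to show $T_y$ is a $\tfrac12$-contraction of $\overline{B_{\delta^0}(q_0)}$ that preserves this ball, invoke Banach's fixed point theorem, and then read off the conclusion for the specialization $G(q,y) = E(q)-y$.

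For the contraction step, I would take $q_1, q_2 \in \overline{B_{\delta^0}(q_0)}$ and write
$$T_y(q_1) - T_y(q_2) = \int_0^1 \bigl(I - [\partial_q G(q_0,y_0)]^{-1}\partial_q G(q_2 + s(q_1-q_2),y)\bigr)(q_1-q_2)\,ds.$$
Since the segment joining $q_1$ and $q_2$ lies in the $q$-slice of $V_{\delta^0}$, the hypothesized sup bound of $\tfrac12$ on the integrand's operator norm immediately yields $|T_y(q_1)-T_y(q_2)| \le \tfrac12|q_1-q_2|$.

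For the self-map property, the key estimate is on $T_y(q_0) - q_0 = -[\partial_q G(q_0,y_0)]^{-1}(G(q_0,y) - G(q_0,y_0))$. A mean-value integration in the $y$-variable using $\partial_y G$, bounded by $B_{\delta^0}$, combined with $\|[\partial_q G(q_0,y_0)]^{-1}\| = M$, gives $|T_y(q_0) - q_0| \le MB_{\delta^0}|y-y_0|$. The precise choice $\delta_1 = (2MB_{\delta^0})^{-1}\delta^0$ is exactly what forces this to be at most $\tfrac12\delta^0$, so combined with the contraction bound one has $|T_y(q) - q_0| \le \tfrac12|q-q_0| + \tfrac12\delta^0 \le \delta^0$ for every $q$ in the ball. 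Banach's theorem then yields, for each $y \in B_{\delta_1}(y_0)$, a unique $q(y) \in \overline{B_{\delta^0}(q_0)}$ with $G(q(y),y) = 0$; for $G(q,y) = E(q)-y$ this is exactly the unique $q$ in the ball with $E(q) = y$, so $E$ is injective on $E^{-1}(B_{\delta_1}(y_0)) \cap B_{\delta^0}(q_0)$. Smoothness of $q(\cdot)$, upgrading injectivity to diffeomorphism, would follow in the usual way from the $C^1$ regularity of $G$ together with the Neumann-series invertibility of $\partial_q G$ throughout $V_{\delta^0}$ guaranteed by the sup hypothesis.

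The main obstacle is not the existence of the fixed point itself, which is standard, but sharpness of the constants. The whole point of this proposition in the paper is that $\delta^0$ and $\delta_1$ must ultimately be expressed in terms of the geometry of $\phi(M)$ (principal curvatures, first and second fundamental forms) to produce the explicit $\epsilon = \min\{K^{-1},\delta/3\}$ used in Theorem 3. So real care is needed in the self-map step, where the precise coefficient $(2MB_{\delta^0})^{-1}$ is what makes the downstream computation of $\delta$ traceable; a weaker constant would still give an Inverse Function Theorem but would blur the geometric dependence.
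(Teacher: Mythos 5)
Your proposal is correct and follows essentially the same contraction-mapping argument the paper gives in its appendix for the Quantitative Implicit Function Theorem: your $T_y$ is the paper's $\Omega_\lambda$, your integral mean-value estimate for the $\tfrac12$-contraction is the paper's bound on $\|\partial_x\Omega_\lambda\|$, and your self-map estimate $|T_y(q_0)-q_0|\le MB_{\delta^0}|y-y_0|$ reproduces the paper's $\|\Omega_\lambda(x_0)-\Omega_{\lambda_0}(x_0)\|\le MB_\delta\delta_1$ step, with the final regularity upgrade deferred to the standard Lipschitz-plus-differentiability argument that the paper also uses. No substantive difference in approach.
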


We will apply the proposition to $E: N\phi(M) \rightarrow \mathbb{R}^N$. Specifically, in applying the proposition we have $((q_0, r_0), y_0)$ as a base point (as opposed to simply writing $(q,y)$ as in the proposition statement, we will write $((q,r),y)$ to  emphasize use of normal coordinates), we have $ G((q,r),y) = E(q,r) -y$ and $G((q_0,r_0),y_0) =0 \: (\Rightarrow E((q_0, r_0)) = y_0)$. Therefore:
$$\partial_{(q,r)}G((q_0,r_0),y_0)=DE(q_0,r_0) =
\left( \begin{array} {ccc}
 \frac{\partial \phi^1(q_0,r_0)}{\partial q^1}+r^i\frac{\partial w_i^1(q_0,r_0)}{\partial q^1} & \cdots &  w_{n-k}^1(q_0) \\
 \vdots & & \vdots \\
 \frac{\partial \phi^N(q_0,r_0)}{\partial q^1}+r^i\frac{\partial w_i^N(q_0,r_0)}{\partial q^1} & \cdots &  w_{n-k}^N(q_0) \end{array} \right)$$
which is invertible for $|r|< K^{-1}$ as required by the proposition's assumption. Again, our goal is to get a $\delta_{(q_0,r_0)}$ neighborhood around $(q_0,r_0)$ on which $E$ is a diffeomorphism. Following the proposition's steps we have: 
\smallskip

\noindent 
\textbf{ Step 1:}\\
\begin{eqnarray*}
B_{\delta^0_{(q_0,r_0)}} &=& \text{sup}_{((q,r),y)\in V_{\delta^0_{(q_0,r_0)}}}||\partial_y G((q,r),y)||\\
&=& \text{sup}_{((q,r),y)\in V_{\delta^0_{(q_0,r_0)}}}||\partial_y (E(q,r)-y)||\\
&=& \text{sup}_{((q,r),y)\in V_{\delta^0_{(q_0,r_0)}}} \left \Vert  \left( \begin{array} {ccc}
-1 & 0 & 0 \\
\vdots & & \vdots\\
0& & -1 \end{array}  \right) \right \Vert =1,
\end{eqnarray*}
where we have taken the maximum of the absolute values of the matrix's entries for the matrix norm.\\
\\
\noindent\textbf{Step 2:}\\
$$M= ||\partial_{(q,r)} G((q_0,r_0),y_0)^{-1}||
=||DE(q_0,r_0)^{-1}||.$$ 
 Using Cramer's rule and the matrix adjugate to invert $DE(q_0,r_0)$, we have
$$
  (DE(q_0,r_0)^{-1})_{(j,z)}=  \frac{1}{\text{det}(DE (q_0,r_0))}(-1)^{(z+j)}DE(q_0,r_0)^*_{(j,z)}$$
      where $DE(q_0,r_0)^*_{(j,z)}$ is the $(j,z)$th minor of $DE(q_0,r_0)$, or the determinant of the $(n-1) \times (n-1)$ matrix constructed by deleting the $j$th row and $z$th column of $DE(q_0,r_0)$, which gives an explicit way to compute $M$ above.\\
      \\
      \noindent 
      \textbf{Step 3:} \\

    We want to compute  $\delta^0_{(q_0,r_0)}$ such that $\text{sup}_{((q,r),y)\in V_{\delta^0_{(q_0,r_0)}}}|| 1 - [ DE(q_0,r_0)]^{-1} DE(q,r)|| \leq  1/2$. Since this expression doesn't rely on $y$, we need $\delta^0_{(q_0,r_0)}$ such that for $|(q,r)|< \delta^0_{(q_0,r_0)} \Rightarrow || 1 - [DE(q_0,r_0)]^{-1}DE(q,r)|| \leq  1/2 $. To do this we can consider a first order Taylor series expansion on $DE(q,r)$ around $(q_0, r_0)$. (Note: the $j$ index in the second matrix below refers to coordinates in $\mathbb{R}^N$, not an exponent.) We have:
\begin{eqnarray*}
\lefteqn{ DE(q,r) }\\
&=&\left( \begin{array} {ccc}
     \frac{\partial \phi^1(q_0,r_0)}{\partial q^1}+r^i\frac{\partial w_i^1(q_0,r_0)}{\partial q^1} & \cdots &  w_{n-K}^1(q_0) \\
     \vdots & & \vdots \\
     \frac{\partial \phi^N(q_0,r_0)}{\partial q^1}+r^i\frac{\partial w_i^N(q_0,r_0)}{\partial q^1} & \cdots &  w_{n-K}^N(q_0) \end{array} \right)\\
&&\quad  + \left( \begin{array} {ccc}
      \sum\limits_{j=1}^{N}R^{(1,1)}_j (q,r)(z-z_o)^j & \cdots &\sum\limits_{j=1}^{N}R^{(1,N)}_j (q,r)(z-z_o)^j \\
      \vdots & & \vdots \\
       \sum\limits_{j=1}^{N}R^{(N,1)}_j (q,r)(z-z_o)^j & \cdots &\sum\limits_{j=1}^{N}R^{(N,N)}_j (q,r)(z-z_o)^j  \end{array} \right) \\
&=& \left( \begin{array} {ccc}
            \frac{\partial \phi^1(q_0)}{\partial q^1}+r_0^i\frac{\partial w_i^1(q_0)}{\partial q^1} & \cdots &  w_{n-K}^1(q_0) \\
            \vdots & & \vdots \\
            \frac{\partial \phi^N(q_0)}{\partial q^1}+r_0^i\frac{\partial w_i^N(q_0)}{\partial q^1} & \cdots &  w_{n-K}^N(q_0) \end{array} \right)\\
&&\quad  + \left( \begin{array} {ccc}
             \sum\limits_{j=1}^{N}R^{(1,1)}_j (q,r)(z-z_o)^j & \cdots &\sum\limits_{j=1}^{N}R^{(1,N)}_j (q,r)(z-z_o)^j \\
             \vdots & & \vdots \\
              \sum\limits_{j=1}^{N}R^{(N,1)}_j (q,r)(z-z_o)^j & \cdots &\sum\limits_{j=1}^{N}R^{(N,N)}_j (q,r)(z-z_o)^j  \end{array} \right) 
              \end{eqnarray*}
  where $z-z_o = (q^1-q_0^1, \cdots, q^k-q_0^k, r^1-r_0^1, \cdots, r^{n-k}-r_0^{N-k})$. We have a uniform bound on the error term given by:
  $$
  |R_j^{(l,m)}(q,r)|\leq \text{max} \big \{\left|\frac{\partial f^m_l((q,r))}{\partial z^{j}}\right| : 1 \leq j \leq N, r \leq .999 K^{-1}, q \in \phi(M) \big \} \overset {\text{def}}{=} G^{(m,l)}  $$
  For $(l,m)$ with $1 \leq l \leq N$ and $1\leq m \leq k$, $f^m_l= \frac{\partial \phi^m(q)}{\partial q^l}+r^i\frac{\partial w_i^m(q)}{\partial q^l}$. For 
  $(l,m)$ with $1 \leq l \leq N$ and $k+1\leq m \leq N$, $f^m_l= w^l_m(q)$.\\

 Plugging the above sum for $DE(q,r)$ in the expression $|| 1 - [DE(q_0,r_0)]^{-1}DE(q,r)||    $ we see that the first term cancels with the identity matrix and we are left with:
 
 $$ \left \Vert [DE(q_0,r_0)]^{-1} \left( \begin{array} {ccc}
       \sum\limits_{j=1}^{N}R^{(1,1)}_j (q,r)(z-z_o)^j & \cdots &\sum\limits_{j=1}^{N}R^{(1,N)}_j (q,r)(z-z_o)^j \\
       \vdots & & \vdots \\
        \sum\limits_{j=1}^{N}R^{(N,1)}_j (q,r)(z-z_o)^j & \cdots &\sum\limits_{j=1}^{N}R^{(N,N)}_j (q,r)(z-z_o)^j  \end{array} \right) \right \Vert $$

$$= \left \Vert \left( \begin{array} {ccc}
  ([DE(q_0,r_0)]^{-1})_{(1,p)}\sum\limits_{j=1}^{N}R^{(p,1)}_j (q,r)(z-z_o)^j & \cdots & ([DE(q_0,r_0)]^{-1})_{(1,p)}\sum\limits_{j=1}^{N}R^{(p,N)}_j (q,r)(z-z_o)^j\\
  \vdots & & \vdots\\
  ([DE(q_0,r_0)]^{-1})_{(N,p)}\sum\limits_{j=1}^{N}R^{(p,1)}_j (q,r)(z-z_o)^j & \cdots & ([DE(q_0,r_0)]^{-1})_{(N,p)}\sum\limits_{j=1}^{N}R^{(p,N)}_j (q,r)(z-z_o)^j \end{array} \right) \right \Vert $$
  $$ \leq \left \Vert \left( \begin{array} {ccc}
    ([DE(q_0,r_0)]^{-1})_{(1,p)}\delta^0_{(q_0,r_0)}\sum\limits_{j=1}^{N}R^{(p,1)}_j (q,r) & \cdots & ([DE(q_0,r_0)]^{-1})_{(1,p)}\delta^0_{(q_0,r_0)}\sum\limits_{j=1}^{N}R^{(p,N)}_j (q,r)\\
    \vdots & & \vdots\\
    ([DE(q_0,r_0)]^{-1})_{(N,p)}\delta^0_{(q_0,r_0)}\sum\limits_{j=1}^{N}R^{(p,1)}_j (q,r) & \cdots & ([DE(q_0,r_0)]^{-1})_{(N,p)}\delta^0_{(q_0,r_0)}\sum\limits_{j=1}^{N}R^{(p,N)}_j (q,r) \end{array} \right) \right \Vert $$
    \begin{equation}\label{two} \leq \left \Vert \left( \begin{array} {ccc}
        ([DE(q_0,r_0)]^{-1})_{(1,p)}\delta^0_{(q_0,r_0)}NG^{(p,1)} & \cdots & ([DE(q_0,r_0)]^{-1})_{(1,p)}\delta^0_{(q_0,r_0)}NG^{(p,N)}\\
        \vdots & & \vdots\\
        ([DE(q_0,r_0)]^{-1})_{(N,p)}\delta^0_{(q_0,r_0)}NG^{(p,1)} & \cdots & ([DE(q_0,r_0)]^{-1})_{(N,p)}\delta^0_{(q_0,r_0)}NG^{(p,N)} \end{array} \right) \right \Vert 
        \end{equation} 

        Letting  $\delta^0_{(q_0,r_0)} =\frac{1} {2\underset{(l,m)}{\text{max}}([DE(q_0,r_0)]^{-1})_{(l,p)}NG^{(p,m)}}$ we have that the last term in (1) does not exceed 1/2, as each entry has absolute value less than $1/2$ by construction. 
\smallskip

        \noindent
        \textbf{ Step 4:} 
        \smallskip
        
        Now that we have a value for $\delta^0_{(q_0,r_0)}$ we can compute $\delta^1_{(q_0,r_0)}$ as in the proposition statement by:
        $$
        \delta^1_{(q_0,r_0)} = (2MB_{\delta^0_{(q_0,r_0)}})^{-1}\delta^0_{(q_0,r_0)} =(2M)^{-1}\delta^0_{(q_0,r_0)} $$
        where the last equality is from Step 1 and $M$ is computed in Step 2.
        \smallskip

        \noindent\textbf{Step 5:}
        \smallskip
        
        By the theorem statement we know $E$ is a diffeomorphism on 
        $$P_{(q_0,r_0)}=E^{-1}(B_{\delta^1_{(q_0,r_0)}}(y_0)) \cap B_{\delta^0_{(q_0,r_0)}}(q_0,r_0).$$ In particular we need a ball of radius $\delta_{(q_0,r_0)}$ around $(q_0,r_0)$ on which $E$ is a diffeomorphism. First, we need a $\delta^3_{(q_0,r_0)}$ such that for $|(q,r)-(q_0,r_0)|<\delta^3_{(q_0,r_0)} $ implies  $|E (q,r) - E(q_0,r_0)| = |E (q,r) - y_0| < \delta^1_{(q_0,r_0)}$. We can again compute this $\delta^3_{(q_0,r_0)}$ using a Taylor series expansion of $E$ around $(q_0,r_0)$. We have
        $$
        E(q,r) = E(q_0,r_0)+ \big( \sum\limits_j R^1_j(q,r)((q,r)-(q_0,r_0))^j, \cdots, \sum\limits_j R^N_j(q,r)((q,r)-(q_0,r_0))^j \big)
        $$
        where we have bounds on the error terms given by:
        $$
        |R^p_j(q,r)| \leq \text{max}\left \{ \left|\frac{\partial (\phi^p+ r^iw_i^p)(q,r)}{\partial z^{j}} \right| : 1 \leq j \leq N, q \in \phi(M), r \leq .999 K^{-1} \right\} \overset{\text{def}}{=} G^{p}.  $$
           Then we have
\begin{eqnarray*}
          \lefteqn{|E(q,r) -E(q_0,r_0)|^2}\\
           &=& |\big(\sum\limits_j R^1_j(q,r)((q,r)-(q_0,r_0))^j, \cdots, \sum\limits_j R^N_j(q,r)((q,r)-(q_0,r_0))^j\big)|^2\\
          &=& \sum\limits_{p=1}^N (\sum\limits_jR^p_j(q,r)((q,r)-(q_0,r_0))^j)^2
         = \sum\limits_{p=1}^N |\sum\limits_jR^p_j(q,r)((q,r)-(q_0,r_0))^j|^2\\
& \leq&  \sum\limits_{p=1}^N \sum\limits_j|R^p_j(q,r)((q,r)-(q_0,r_0))^j|^2 \leq \sum\limits_{p=1}^N \sum\limits_j|G^p((q,r)-(q_0,r_0))^j|^2\\
           &\leq & \sum\limits_{p=1}^N \sum\limits_j|G^p \delta^3_{(q_0,r_0)}|^2  = (\delta^3_{(q_0,r_0)})^2\sum\limits_{p=1}^N \sum\limits_j|G^p |^2 = N(\delta^3_{(q_0,r_0)})^2\sum\limits_{p=1}^N |G^p |^2.
           \end{eqnarray*}
Therefore
          $$  |E(q,r) -E(q_0,r_0)| \leq \delta^3_{(q_0,r_0)}\sqrt{N\sum\limits_{p=1}^N |G^p |^2},$$
          and letting $\delta^3_{(q_0,r_0)} = \delta^1_{(q_0,r_0)}/ \left (\sqrt{N\sum\limits_{p=1}^N |G^p |^2} \right )$ gives the required radius. We finally set $\delta_{(q_0,r_0)} = \text{min} \{\delta^3_{(q_0,r_0)}, \delta^0_{(q_0,r_0)}\}$

        Finally, returning to the statement in Lemma 2, we had:
        $H_t|_{\phi(M)}$ is injective for $t < t^*= \text{min}\{\epsilon, \frac{\delta_{H_t}}{3} \}= \text{min}\{K^{-1}, \delta/3, \frac{\delta_{H_t}}{3} \}$. By definition we know that for $x,y \in \phi(M)$ and $d_{\mathbb{R}^N}(x,y) < \delta$ we have $x+r_x^iw_i(x) \neq y+r_y^iw_i(y)$ (where $|r_x|<\delta $ and $|r_x|<\delta$). However, we also have that for $x,y \in \phi(M)$ satisfying $d_{\mathbb{R}^N}(x,y) < \delta_{H_t}$, $x+ \vec{tv}(x) \neq y+ \vec {tv}(y)$ for $t < \epsilon < \delta$. Therefore we can say that $\delta< \delta_{H_t}$. This is because our specific vector field $t\vec{v}$ gives a particular set of $r^i$'s at each point, allowing for a larger diffeormorphic neighborhood around the base point than a neighborhood that works for all set of $r^i$'s. Therefore we have
 $H_t|_{\phi(M)}$ is injective for $t < t^*= \text{min}\{\epsilon, \frac{\delta_{H_t}}{3} \}= \text{min}\{K^{-1}, \frac{\delta}{3}, \frac{\delta_{H_t}}{3} \} = \text{min}\{K^{-1}, \frac{\delta}{3} \}$ as required. 

 We have shown that $\phi_t$ is an injective immersion for $t\leq t^*$ (by the fact that $t^* \leq \epsilon$). Since $M$ is compact $\phi_t$ is an embedding.  

   This concludes the proof of Theorem 3.
        \end{proof}



\section*[appendix]{Appendix: The Quantitative Implicit Function}

\noindent
This quantitative version of the Implicit Function theorem and its variation of standard proof techniques is due to Calangelo Liverani \cite{CL}.

Fix $n,m  \in \mathbb{N}$ and $F \in C^1(\mathbb{R}^{n+m}, \mathbb{R}^m)$ and let $(x_0, \lambda_0) \in \mathbb{R}^m \times \mathbb{R}^n$ satisfy $F(x_0, \lambda_0) =0 $. For $\delta > 0$ let $V_{\delta} = \{ (x,\lambda) \in \mathbb{R}^{m+n} : || x-x_0|| \leq \delta, ||\lambda -\lambda_0|| \leq \delta \}$. 

\begin{thm} {\rm (Quantitative Implicit Function Theorem)}
Assume that $\partial_xF(x_0,\lambda_0)$ is invertible and choose $\delta > 0$ such that $\text{sup}_{(x,\lambda)\in V_{\delta}}||1 - [\partial_xF(x_0,\lambda_0)]^{-1}\partial_xF(x,\lambda)|| \leq 1/2$. Let $B_{\delta} = \text{sup}_{(x,\lambda)\in V_{\delta}}|| \partial_{\lambda}F(x,\lambda)||$ and $M = ||\partial_xF(x_0,\lambda_0)^{-1}||$. Set $\delta_1 = (2MB_{\delta})^{-1}\delta$ and $\Gamma_{\delta_1} = \{ \lambda \in \mathbb{R}^n : \: \: || \lambda - \lambda_0|| < \delta_1 \}$. Then there exists $g \in C^1(\Gamma_{\delta_1}, \mathbb{R}^m)$ such that all the solutions of the equation $F(x, \lambda) =0$ in the set $\{ (x, \lambda) : \: \: || \lambda - \lambda_0|| < \delta_1, ||x-x_0|| < \delta\}$ are given by $(g(\lambda), \lambda)$. In addition,
  $ \partial_{\lambda}g(\lambda) = - (\partial_x F(g(\lambda), \lambda))^{-1} \partial_{\lambda} F(g(\lambda), \lambda)$ 
\end{thm}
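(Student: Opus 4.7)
The plan is to prove this by the standard Banach contraction argument, but with careful tracking of the quantitative constants so that the balls produced really are of the radii stated. Throughout, write $A = \partial_x F(x_0,\lambda_0)$ and, for each fixed $\lambda \in \Gamma_{\delta_1}$, define the auxiliary map
\[
T_\lambda(x) \;=\; x - A^{-1} F(x,\lambda),
\]
so that fixed points of $T_\lambda$ are precisely the solutions of $F(x,\lambda)=0$ in a neighborhood of $x_0$. The whole argument then reduces to verifying the two standing hypotheses of the Banach fixed-point theorem for $T_\lambda$ on the closed ball $\overline{B_\delta(x_0)} \subset \mathbb{R}^m$, uniformly in $\lambda$.

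First I would verify the contraction estimate. Compute $\partial_x T_\lambda(x) = 1 - A^{-1}\partial_x F(x,\lambda)$, whose operator norm is at most $1/2$ on $V_\delta$ by the standing hypothesis. The mean value theorem along the segment from $x_1$ to $x_2$ (both in $\overline{B_\delta(x_0)}$) then gives $\|T_\lambda(x_1) - T_\lambda(x_2)\| \le \tfrac{1}{2}\|x_1 - x_2\|$. Next, for the self-mapping property, estimate
\[
\|T_\lambda(x) - x_0\| \;\le\; \|T_\lambda(x) - T_\lambda(x_0)\| + \|T_\lambda(x_0) - x_0\| \;\le\; \tfrac{1}{2}\|x - x_0\| + \|A^{-1} F(x_0,\lambda)\|.
\]
Since $F(x_0,\lambda_0) = 0$, another application of the mean value theorem to the map $\mu \mapsto F(x_0,\mu)$ along the segment from $\lambda_0$ to $\lambda$ gives $\|F(x_0,\lambda)\| \le B_\delta \|\lambda-\lambda_0\|$. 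Combining, $\|T_\lambda(x)-x_0\| \le \tfrac{\delta}{2} + M B_\delta \|\lambda-\lambda_0\|$, and for $\lambda \in \Gamma_{\delta_1}$ the choice $\delta_1 = (2MB_\delta)^{-1}\delta$ makes this at most $\delta$. The Banach fixed-point theorem thus produces a unique $g(\lambda) \in \overline{B_\delta(x_0)}$ with $F(g(\lambda),\lambda)=0$, and this $g(\lambda)$ exhausts the solutions in $\{(x,\lambda) : \|x-x_0\| < \delta, \|\lambda-\lambda_0\| < \delta_1\}$ by uniqueness of the fixed point.

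Continuity (in fact Lipschitz continuity) of $g$ in $\lambda$ follows from the uniform contraction: for $\lambda, \mu \in \Gamma_{\delta_1}$,
\[
\|g(\lambda) - g(\mu)\| \;=\; \|T_\lambda(g(\lambda)) - T_\mu(g(\mu))\| \;\le\; \tfrac{1}{2}\|g(\lambda) - g(\mu)\| + M B_\delta \|\lambda - \mu\|,
\]
so $\|g(\lambda)-g(\mu)\| \le 2MB_\delta \|\lambda-\mu\|$.

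The main obstacle is upgrading $g$ from Lipschitz to $C^1$ and obtaining the claimed derivative formula. For this I would proceed as follows. Since $\partial_x F(x_0,\lambda_0)$ is invertible and $\partial_x F$ is continuous, the contraction estimate shows $\partial_x F(g(\lambda),\lambda)$ remains invertible throughout $V_\delta$ (its difference from $A$ times $A^{-1}$ has norm at most $1/2$, so a Neumann series argument applies). Then, writing $F(g(\lambda+h),\lambda+h) - F(g(\lambda),\lambda) = 0$ and expanding to first order via the $C^1$ hypothesis on $F$, one obtains
\[
\partial_x F(g(\lambda),\lambda)\bigl(g(\lambda+h) - g(\lambda)\bigr) + \partial_\lambda F(g(\lambda),\lambda) h + o(\|h\|) = 0,
\]
where the Lipschitz bound on $g$ controls the error uniformly. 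Solving gives $g$ differentiable at $\lambda$ with $\partial_\lambda g(\lambda) = -\bigl(\partial_x F(g(\lambda),\lambda)\bigr)^{-1}\partial_\lambda F(g(\lambda),\lambda)$, and continuity of this expression in $\lambda$ (from continuity of $g$ and of the partials of $F$) yields $g \in C^1(\Gamma_{\delta_1},\mathbb{R}^m)$. This completes the proof.
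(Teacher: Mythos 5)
Your proposal is correct and follows essentially the same route as the paper's own proof: both define the auxiliary map $x \mapsto x - \partial_x F(x_0,\lambda_0)^{-1} F(x,\lambda)$ (your $T_\lambda$, the paper's $\Omega_\lambda$), verify the contraction and self-mapping estimates on $\overline{B_\delta(x_0)}$ using the standing hypothesis and the bound $\|F(x_0,\lambda)\| \le B_\delta\|\lambda-\lambda_0\|$, invoke the Banach fixed-point theorem, derive Lipschitz continuity from the uniform contraction constant, and then obtain differentiability and the formula for $\partial_\lambda g$ via a first-order expansion of $F$. The only cosmetic difference is that you spell out the Neumann-series reason for invertibility of $\partial_x F(g(\lambda),\lambda)$, which the paper leaves implicit.
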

\textbf{Proof:} Set $A(x, \lambda) = \partial_x F(x, \lambda), M = ||A(x_0, \lambda_0)^{-1}||$. 

We want to solve the equation $F(x, \lambda)=0$. Let $\lambda$ be such that $ ||\lambda - \lambda_0|| < \delta_1 \leq \delta$. Consider $U_{\delta} = \{x \in \mathbb{R}^m : ||x-x_0|| \leq \delta \}$ and the function $\Omega : U_{\delta} \rightarrow \mathbb{R}^m$ defined by
$$
 \Omega_{\lambda}(x) = x- A(x_0,\lambda_0)^{-1}F(x, \lambda).
 $$
For $x \in U(\lambda), F(x, \lambda) =0$ is equivalent to $x = \Omega_{\lambda} (x)$. \\
Next, 
$$
  ||\Omega_{\lambda}(x_0) - \Omega_{\lambda_0}(x_0)|| \leq M || F (x_0, \lambda)|| \leq MB_{\delta}\delta_1 
$$
In addition, $||\partial_x \Omega_{\lambda}|| =|| 1 - A(x_0, \lambda_0)^{-1}A(x, \lambda)|| \leq 1/2$. Thus
$$
||\Omega_{\lambda}(x) -x_0|| \leq \frac{1}{2} ||x-x_0|| + || \Omega_{\lambda}(x_0) -x_0|| \leq \frac{1}{2} ||x-x_0|| + MB_{\delta}\delta_1 \leq \delta $$
The existence of $x \in U_{\delta}$ such that $\Omega_{\lambda} (x) =x$ follows by the Fixed Point Theorem. We have therefore obtained a function $g: \Gamma_{\delta_1} = \{ \lambda \: : \: ||\lambda - \lambda_0 || \leq \delta_1\} \rightarrow \mathbb{R}^m$ such that $F(g(\lambda),\lambda) =0$. 

It remains to prove regularity. 
Let $\lambda, \lambda' \in \Gamma_{\delta_1}$. From above we have
$$
||g(\lambda) - g(\lambda') || \leq \frac{1}{2} ||g(\lambda) - g(\lambda') || + M B_{\delta} |\lambda - \lambda '| $$
This yields the Lipschitz continuity of the function $g$. To obtain the differentiability we note that, by the differentiability of $F$ and the above Lipschitz continuity of $g$, for $h \in \mathbb{R}^n$ small enough, 
$$
||F(g(\lambda +h), \lambda +h) -F(g(\lambda), \lambda) + \partial_x F[g(\lambda+h) -g(\lambda), h] + \partial_{\lambda} F(g(h),h) || = o(||h||)
$$
Since $F(g(\lambda+h), \lambda +h) = F(g(\lambda), \lambda) =0$ we have
$$\underset{h \rightarrow 0}{\text{lim}} ||h||^{-1} ||g(\lambda+h) - g(\lambda) + [\partial_xF(g(h),h)]^{-1} \partial _{\lambda}F(g(h),h)|| =0, $$
which concludes the proof.

\bibliography{references}
\bibliographystyle{amsplain}
   \end{document}